\newtheorem{theorem}{Theorem}
\newtheorem{cor}[theorem]{Corollary}
\theoremstyle{definition}
\theoremstyle{remark}
\newtheorem{remark}[theorem]{Remark}
\numberwithin{equation}{section}
\newcommand{\rr}{{\mathbb R}}
\newcommand{\nat}{{\mathbb N}}
\newcommand{\rd}{{\mathbb R^d}}
\newcommand{\Exp}{\mathbb E}
\newcommand{\SD}{\mathbb D}
\newcommand{\Ima}{\operatorname{Im}}
\newcommand{\GL}{\operatorname{GL}}
\newcommand{\Cov}{\operatorname{Cov}}
\begin{document}
\sloppy
\title[Multiparameter Transfer Theorem]{A General Multiparameter Version\\ of Gnedenko's Transfer Theorem}
\author{Peter Kern}
\address{Peter Kern, Mathematisches Institut, Heinrich-Heine-Universit\"at D\"usseldorf, D-40225 D\"usseldorf, Germany}
\email{kern\@@{}math.uni-duesseldorf.de}

\date{\today}

\begin{abstract}
Limit theorems for a random number of independent random variables are frequently called transfer theorems. Investigations into this direction for sums of random variables with independent random sample size have been originated by Gnedenko. We present a widely applicable transfer theorem for random variables on a general metric space with random multiparameters instead of random sample sizes. This summarizes an intrinsic principle behind the transfer type results known from the literature.
\end{abstract}

\keywords{transfer theorem, random stopping, random time change, independent random indices, random fields, negative association, infinitesimal triangular array, semistable domain of attraction, random allocations.}
\subjclass[2010]{Primary 60F05, 60B10; Secondary 60B12, 60B15, 60G60.}

\maketitle

\baselineskip=18pt

\section{Introduction}

In 1969 Gnedenko and Fahim \cite{GF} proved the following limit theorem for sums of a random number of real-valued random variables; see also \cite{Gne} for the precise assumptions. For every $n\in\nat$ let $(X_{n,k})_{k\in\nat}$ be a sequence of independent and identically distributed (i.i.d.) random variables and let $(T_n)_{n\in\nat}$ be a sequence of positive integer valued random variables such that $T_n$ and $(X_{n,k})_{k\in\nat}$ are independent for every $n\in\nat$. If there exists a sequence $k_n\to\infty$ such that as $n\to\infty$
\begin{equation}\label{setting}
\sum_{k=1}^{k_n}X_{n,k}\Rightarrow\mu\quad\text{ and }\quad\frac{T_n}{k_n}\Rightarrow\rho
\end{equation}
for some probability distributions $\mu$ on $\rr$, respectively $\rho$ on $(0,\infty)$, then the random sum converges to a mixture distribution
\begin{equation}\label{gne}
\sum_{k=1}^{T_n}X_{n,k}\Rightarrow\int_0^\infty\mu^t\,d\rho(t),
\end{equation}
where ``$\Rightarrow$'' denotes convergence in distribution, i.e.\ the distribution of the random variable on the left-hand side converges weakly to the measure on the right-hand side, and $\mu^t$ denotes the $t$-fold convolution power of the necessarily infinitely divisible distribution $\mu$, well defined by the L\'evy-Khintchine formula. The result is frequently called the {\it transfer theorem of Gnedenko}, since randomization of the  sample size transfers the limits in \eqref{setting} to a randomized limit in \eqref{gne}. There is an ongoing considerable interest in random limit theorems of this kind from numerous applications and today there exists a vast literature on transfer theorems generalizing the above. Without demanding to give a complete list, we refer to \cite{Rob,Tho,KKru,FKT,KK1,KK2,DY} and the literature cited therein, to mention just a few direct generalizations. Comprehensive studies of random limit theorems of various kind are given in the monographs \cite{Gut,GK,BK,Sil}. Although there is an intrinsic principle behind all the transfer type results, a new proof is given in every new situation. Our aim is to capture this principle and to prove a version of the transfer theorem in a generality as broad as possible. For this reason we replace the random number $T_n$ by a random multiparameter in $\nat^d$. We will introduce a natural homeomorphism on the discrete multiparameters that serves as a control mapping for the transfer mechanism. We also aim to consider general state spaces for the random variables $X_{n,k}$. Since we are concerned with weak convergence of probability measures, a requirement on our state space $E$ is that a probability measure $\mu$ on $E$ is uniquely determined by the values $\int_Ef\,d\mu$ for bounded and continuous functions $f:E\to\rr$, i.e.\ Riesz' Theorem applies. Thus we may consider $E$ to be a general metric space; see \cite{Par,Hey2} for details. In Section 2 we present our general transfer theorem and briefly show how it covers classical transfer results appearing in the literature. In Section 3 we focus on three specific examples that give a glance on the full range of possible applications of our general transfer theorem.

\section{Main Result}

Let $\mathcal F=\big\{Y_{\mathbf n}:\,\mathbf n=(n_{1},\ldots,n_{d})\in\nat^d\}$ be a family of $E$-valued random variables, where $E$ is a metric space equipped with its Borel $\sigma$-algebra. We use boldface notation for multiparameters. Let $\phi:\nat^d\to D\subseteq\rr^\ell$ be a homeomorphism.
Note that we consider $\nat^d$ and $D$ as subsets of $\rd$, respectively $\rr^\ell$, with the induced standard topology. Hence any subset of $\nat^d$ is a Borel set and any mapping $\phi:\nat^d\to\rr^\ell$ is continuous. The only assumptions we have to make to ensure that $\phi$ is a homeomorphism are that $\phi$ is injective and that the image $D=\Ima\phi$ is discrete in the sense that intersections of $D$ with sufficiently small open neighborhoods of any $\mathbf x\in D$ coincide with the single point set $\{\mathbf x\}$. Now let $\Delta\subseteq\overline{D}\setminus D$ be a nonempty Borel set of limit points of $D$, where $\overline{D}$ denotes the closure of $D$ in $\rr^\ell$. We assume that for any $\mathbf t=(t_{1},\ldots,t_{\ell})\in\Delta$ and any sequence $(\mathbf N_{n})_{n\in\nat}$ in $\nat^d$ we have
\begin{equation}\label{weakconv}
Y_{\mathbf N_{n}}\Rightarrow\mu_{\mathbf t}\quad
\text{ whenever }\quad\phi(\mathbf N_{n})\to\mathbf t,
\end{equation}
where $\{\mu_{\mathbf t}:\,\mathbf t\in\Delta\}$ is a weakly continuous family of probability measures on $E$. Thus the homeomorphism $\phi$ serves as a control mapping for weak convergence in \eqref{weakconv}. For randomizations of the multiparameter $\mathbf n$ of $\mathcal F$ we are able to give the following general version of a transfer theorem.
\begin{theorem}\label{transfer}
With the above assumptions and notation let $(\mathbf T_{n})_{n\in\nat}$ be a sequence of random variables with values in $\nat^d$ such that $\mathbf T_{n}$ and $\mathcal F$ are independent for every $n\in\nat$. Suppose that $\phi(\mathbf T_{n})\Rightarrow\rho$ for some probability distribution $\rho$ with $\rho(\Delta)=1$. Then we have as $n\to\infty$
\begin{equation}\label{main}
Y_{\mathbf T_{n}}\Rightarrow\int_{\Delta}\mu_{\mathbf t}\,{\rm d}\rho(\mathbf t).
\end{equation}
\end{theorem}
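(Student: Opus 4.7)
The plan is to reduce the weak convergence in \eqref{main} to a statement about the one-dimensional random variables $\phi(\mathbf T_n)$, where the assumption $\phi(\mathbf T_n)\Rightarrow\rho$ and the Portmanteau/continuous-mapping theorem can be applied. The bridge between the $E$-valued family $\mathcal F$ and $\rho$ is the independence of $\mathbf T_n$ and $\mathcal F$, which lets us integrate out $\mathcal F$ first.

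Fix a bounded continuous $f:E\to\rr$ and define
\[
g:D\cup\Delta\to\rr,\qquad g(\mathbf x)=\begin{cases}\Exp\bigl[f(Y_{\phi^{-1}(\mathbf x)})\bigr], & \mathbf x\in D,\\[1mm] \int_E f\,{\rm d}\mu_{\mathbf x}, & \mathbf x\in\Delta.\end{cases}
\]
Extending $g$ by $0$ outside $D\cup\Delta$ gives a bounded Borel function on $\rr^\ell$ (measurability on $D$ is automatic since $D$ is discrete, and on $\Delta$ it follows from the weak continuity of $\mathbf t\mapsto\mu_{\mathbf t}$). By independence of $\mathbf T_n$ and $\mathcal F$ we have
\[
\Exp\bigl[f(Y_{\mathbf T_n})\bigr]=\Exp\bigl[g(\phi(\mathbf T_n))\bigr].
\]
If we can show that $\phi(\mathbf T_n)\Rightarrow\rho$ implies $\Exp[g(\phi(\mathbf T_n))]\to\int g\,{\rm d}\rho$, the conclusion follows, because the right-hand side equals $\int_\Delta\!\bigl(\int_E f\,{\rm d}\mu_{\mathbf t}\bigr)\,{\rm d}\rho(\mathbf t)=\int_E f\,{\rm d}\bigl(\int_\Delta\mu_{\mathbf t}\,{\rm d}\rho(\mathbf t)\bigr)$, and $f$ was an arbitrary bounded continuous test function.

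The key step, and the one where the hypotheses are really used, is to verify that $g$ is continuous at every $\mathbf t\in\Delta$. Let $\mathbf x_k\to\mathbf t$ with $\mathbf x_k\in D\cup\Delta$. Along the subsequence with $\mathbf x_k\in D$, write $\mathbf x_k=\phi(\mathbf N_k)$; then $\phi(\mathbf N_k)\to\mathbf t\in\Delta$, so by \eqref{weakconv} we have $Y_{\mathbf N_k}\Rightarrow\mu_{\mathbf t}$ and hence $g(\mathbf x_k)=\Exp[f(Y_{\mathbf N_k})]\to\int f\,{\rm d}\mu_{\mathbf t}=g(\mathbf t)$. Along the subsequence with $\mathbf x_k\in\Delta$, weak continuity of $\{\mu_{\mathbf t}\}$ gives $\mu_{\mathbf x_k}\Rightarrow\mu_{\mathbf t}$, hence $g(\mathbf x_k)\to g(\mathbf t)$ again. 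Thus the set of continuity points of the extended $g$ contains $\Delta$.

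Having established this, I would close the argument by invoking the Portmanteau theorem: since $g$ is bounded and continuous on a Borel set $C\supseteq\Delta$ with $\rho(C)=1$, and $\phi(\mathbf T_n)\Rightarrow\rho$ as random elements of $\rr^\ell$, it follows that $\Exp[g(\phi(\mathbf T_n))]\to\int g\,{\rm d}\rho$. The only delicate point I expect is precisely the continuity verification above, where one must handle the two cases $\mathbf x_k\in D$ and $\mathbf x_k\in\Delta$ separately; the assumption \eqref{weakconv} covers the first and the weak continuity of the limit family covers the second, which is exactly why both hypotheses are imposed.
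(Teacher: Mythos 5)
Your proof is correct and follows essentially the same route as the paper's: integrate out $\mathcal F$ via independence, define the test function $g$ on $D\cup\Delta$ (the paper's $\psi$), verify its continuity at points of $\Delta$ separately along $D$ (using \eqref{weakconv}) and along $\Delta$ (using weak continuity of $\mathbf t\mapsto\mu_{\mathbf t}$), and pass to the limit using $\phi(\mathbf T_n)\Rightarrow\rho$. One small caveat on the closing step: the principle you invoke (``$g$ bounded and continuous on a Borel set $C\supseteq\Delta$ with $\rho(C)=1$ implies $\Exp[g(\phi(\mathbf T_n))]\to\int g\,{\rm d}\rho$'') is false as literally stated (take $g=1_{\{0\}}$, $\rho=\delta_0$, $\phi(\mathbf T_n)=1/n$); what saves you is that the laws of $\phi(\mathbf T_n)$ are themselves concentrated on $C=D\cup\Delta$, so the weak convergence restricts to the subspace $D\cup\Delta$ on which $g$ is genuinely bounded and continuous --- which is exactly how the paper phrases the final limit.
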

\begin{proof}
For a probability measure $\mu$ and a fixed, bounded and continuous function $f:E\to\rr$ we use the abbreviation
$$\langle\mu,f\rangle=\int_Ef(x)\,{\rm d}\mu(x).$$
Let $\nu(\mathbf n)=P_{Y_{\mathbf n}}$ denote the distribution of $Y_{\mathbf n}$ then $\varphi:D\to\rr$ defined by
$$\varphi(\mathbf t)=\left\langle\nu(\phi^{-1}(\mathbf t)),f\right\rangle\quad\text{ for }\mathbf t\in D$$
is a bounded continuous function, where continuity is due to the fact that $D$ is discrete. We will now extend $\varphi$ to a bounded continuous function $\psi:D\cup\Delta\to\rr$ by
$$\psi(\mathbf t)=\begin{cases}\varphi(\mathbf t) & \text{ if }\mathbf t\in D,\\ \langle\mu_{\mathbf t},f\rangle & \text{ if }\mathbf t\in\Delta.\end{cases}$$
Clearly, $\psi$ is bounded on $D\cup\Delta$, and continuous on $D$ and on $\Delta$, respectively. The latter is due to our assumption on weak continuity of $\mathbf t\mapsto\mu_{\mathbf t}$ on $\Delta$. Now let $\mathbf t\in\Delta$ be arbitrary and let $(\mathbf t_{n})_{n\in\nat}$ be a sequence in $D$ such that $\mathbf t_{n}\to\mathbf t$. Then $\mathbf N_{n}=\phi^{-1}(\mathbf t_{n})$ fulfills the right-hand side of \eqref{weakconv} and thus we get
$$\psi(\mathbf t_{n})=\varphi(\mathbf t_{n})=\left\langle\nu(\phi^{-1}(\mathbf t_n)),f\right\rangle=\left\langle\nu(\mathbf N_n),f\right\rangle\to\left\langle\mu_{\mathbf t},f\right\rangle=\psi(\mathbf t)$$
showing continuity of $\psi$.\\
By the independence assumption together with $\phi(\mathbf T_{n})\Rightarrow\rho$ we finally obtain
\begin{align*}
\left\langle P_{Y_{\mathbf T_n}},f\right\rangle & =\int_{\nat^d}\left\langle\nu(\mathbf n),f\right\rangle\,{\rm d}P_{\mathbf T_{n}}(\mathbf n)=\int_{D}\left\langle\nu(\phi^{-1}(\mathbf t)),f\right\rangle\,{\rm d}P_{\phi(\mathbf T_{n})}(\mathbf t)\\
& =\int_{D\cup\Delta}\psi(\mathbf t)\,{\rm d}P_{\phi(\mathbf T_{n})}(\mathbf t)\to\int_{D\cup\Delta}\psi(\mathbf t)\,{\rm d}\rho(\mathbf t)\\
& =\int_{\Delta}\langle\mu_{\mathbf t},f\rangle\,{\rm d}\rho(\mathbf t)=\left\langle\int_{\Delta}\mu_{\mathbf t}\,{\rm d}\rho(\mathbf t),f\right\rangle.
\end{align*}
This proves our assertion, since the bounded continuous function $f$ is arbitrary.
\end{proof}
We will now apply Theorem \ref{transfer} to the classical convergence scheme of row-sums of triangular arrays of random variables, extending versions of the transfer theorem on second countable locally compact groups by Hazod \cite{Haz,HS} and on separable Banach spaces by Siegel \cite{Sie}. We have to assume second countability to ensure that products of Borel measurable random variables are again Borel measurable; cf.~Proposition 4.1.7 in \cite{Dud}. For a metric space $E$ second countability is equivalent to separability; cf.~Proposition 2.1.4 in \cite{Dud}. Our multiparameter setting allows to extend considerations to random fields as follows. For notational convenience we will write the group multiplication as summation. Note that in case of non-Abelian groups this operation has to be taken in a certain fixed order. In the sequel, relations and operations on multiparameters in $\rd$ are always meant componentwise. Let $E$ be a separable metrizable group. For $n\in\nat$ let $\mathcal F_n=\{X_{n,\mathbf k}:\,\mathbf k\in\nat^d\}$ be a random field, i.e.~a family of $E$-valued random variables, and define for any $n\in\nat$ and $\mathbf N\in\nat^d$
$$Y_{n,\mathbf N}=\sum_{\mathbf k\leq\mathbf N}X_{n,\mathbf k}.$$ 
Suppose the existence of a (componentwise) strictly increasing sequence $(\mathbf k_{n})_{n\in\nat}$ in $\nat^d$ such that for some $\emptyset\not=T\subseteq[0,\infty)^d$ and some family $\{\mu_{\mathbf t}:\,\mathbf t\in T\}$ of probability distributions on $E$ we have as $n\to\infty$
\begin{equation}\label{arrayconvrf}
Y_{n,\lfloor\mathbf k_{n}\mathbf t\rfloor}\Rightarrow\mu_{\mathbf t},
\end{equation}
uniformly on compact subsets of $\mathbf t\in T$, i.e.~for an arbitrary sequence $(\mathbf t_n)_{n\in\nat}$ in $T$ with $\mathbf t_n\to\mathbf t\in T$ we have $\langle P_{Y_{n,\lfloor\mathbf k_n\mathbf t_n\rfloor}},f\rangle\to\langle\mu_{\mathbf t},f\rangle$ for any bounded continuous function $f:E\to\rr$.
\begin{cor}\label{GFrf}
With the above assumptions and notation let $(\mathbf T_{n})_{n\in\nat}$ be a sequence of random variables with values in $\nat^d$ such that $\mathbf T_{n}$ and $\mathcal F_n$ are independent for every $n\in\nat$. Suppose $\mathbf T_{n}/\mathbf k_{n}\Rightarrow\rho$ for some probability distribution $\rho$ with $\rho(T)=1$. Then we have as $n\to\infty$
\begin{equation}\label{transferrf}
Y_{n,\mathbf T_{n}}\Rightarrow\int_{T}\mu_{\mathbf t}\,{\rm d}\rho(\mathbf t).
\end{equation}
\end{cor}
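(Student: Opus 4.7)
The plan is to reduce Corollary \ref{GFrf} to Theorem \ref{transfer} by merging the array index $n$ into the multiparameter. Specifically, I would work with the combined family $\mathcal F=\{Y_{n,\mathbf k}:(n,\mathbf k)\in\nat^{1+d}\}$, regarding the index $(n,\mathbf k)$ as a single multiparameter in $\nat^{1+d}$, and take the control homeomorphism $\phi:\nat^{1+d}\to\rr^{1+d}$ to be
$$\phi(n,\mathbf k)=\left(\tfrac1n,\tfrac{\mathbf k}{\mathbf k_n}\right).$$
Injectivity is immediate (the first coordinate recovers $n$, then $\mathbf k$ can be read off from the second), and $D=\Ima\phi$ is discrete since the values $\frac1n$ are isolated and, for each fixed $n$, the grid $\mathbf k_n^{-1}\nat^d$ is discrete in $\rr^d$. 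The appropriate set of limit points is $\Delta=\{0\}\times T\subseteq\overline{D}\setminus D$, identified with $T$ in the obvious way.

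Next I would verify the two hypotheses required by Theorem \ref{transfer}. Weak continuity of $\mathbf t\mapsto\mu_{\mathbf t}$ on $\Delta$ is a short diagonal consequence of the uniform convergence \eqref{arrayconvrf}: if $\mathbf t_m\to\mathbf t$ in $T$, then for each $m$ one picks $n_m$ large enough that the distribution of $Y_{n_m,\lfloor\mathbf k_{n_m}\mathbf t_m\rfloor}$ is arbitrarily close to $\mu_{\mathbf t_m}$, while the assumed uniform convergence (applied to any extension of $(\mathbf t_m)$ to a sequence indexed by $n\in\nat$ still tending to $\mathbf t$) forces it to also be close to $\mu_{\mathbf t}$. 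For \eqref{weakconv}, suppose $(n_j,\mathbf L_j)\in\nat^{1+d}$ satisfies $\phi(n_j,\mathbf L_j)\to(0,\mathbf t)\in\Delta$; then $n_j\to\infty$ and $\mathbf s_j:=\mathbf L_j/\mathbf k_{n_j}\to\mathbf t$. Extending $(\mathbf s_j)$ to a sequence $(\tilde{\mathbf t}_n)_{n\in\nat}$ in $T$ with $\tilde{\mathbf t}_n\to\mathbf t$ (for instance by setting $\tilde{\mathbf t}_n=\mathbf t$ outside $\{n_j\}$), the uniform convergence gives $Y_{n,\lfloor\mathbf k_n\tilde{\mathbf t}_n\rfloor}\Rightarrow\mu_{\mathbf t}$, and restricting to the subsequence $n_j$ yields $Y_{n_j,\mathbf L_j}\Rightarrow\mu_{\mathbf t}$, as required.

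Now set $\tilde{\mathbf T}_n=(n,\mathbf T_n)$, a $\nat^{1+d}$-valued random variable. Since $\mathbf T_n$ is independent of $\mathcal F_n$ and $\tilde{\mathbf T}_n$ is concentrated on the slice $\{n\}\times\nat^d$, the integration step that opens the proof of Theorem \ref{transfer} applies verbatim; no joint independence across different $n$'s is required (alternatively, one may, without loss of generality, work on an enlarged product probability space). A direct computation shows $\phi(\tilde{\mathbf T}_n)=(1/n,\mathbf T_n/\mathbf k_n)\Rightarrow\delta_0\otimes\rho$, a measure assigning full mass to $\Delta$ because $\rho(T)=1$. Theorem \ref{transfer} therefore delivers
$$Y_{n,\mathbf T_n}\Rightarrow\int_\Delta\mu_{\mathbf t}\,{\rm d}(\delta_0\otimes\rho)(\mathbf t)=\int_T\mu_{\mathbf t}\,{\rm d}\rho(\mathbf t),$$
which is exactly \eqref{transferrf}.

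The one genuinely subtle step is verifying \eqref{weakconv}: the hypothesis \eqref{arrayconvrf} is phrased along sequences of the specific form $\lfloor\mathbf k_n\mathbf t_n\rfloor$ indexed by all $n\in\nat$, whereas an arbitrary sequence $(n_j,\mathbf L_j)$ only provides a subsequence with $\mathbf L_j/\mathbf k_{n_j}\to\mathbf t$. Bridging this gap by the harmless extension from a subsequence to a full sequence, as sketched above, is the main technical point; the rest of the argument is bookkeeping.
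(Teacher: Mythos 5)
Your proposal is correct and follows essentially the same route as the paper: merge $n$ into the multiparameter, use $\phi(n,\mathbf k)=(\tfrac1n,\tfrac{\mathbf k}{\mathbf k_n})$ with $\Delta=\{0\}\times T$, deduce \eqref{weakconv} and weak continuity from the uniform convergence in \eqref{arrayconvrf}, handle the independence mismatch (the paper does this by passing to an independent copy of $\mathbf T_n$), and apply Theorem \ref{transfer} with $\phi(n,\mathbf T_n)\Rightarrow\delta_0\otimes\rho$. Your explicit subsequence-to-full-sequence extension makes precise a step the paper leaves implicit, but it is the same argument.
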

\begin{proof}
Let $\mathcal F=\{Y_{n,\mathbf N}:\,n\in\nat,\,\mathbf N\in\nat^d\}$ and note that without loss of generality we may assume that $\mathbf T_{n}$ and $\mathcal F$ are independent for every $n\in\nat$. Otherwise we may change to random variables $\mathbf T_{n}'$ independent of $\mathcal F$ and with the same distribution as $\mathbf T_{n}$ which leave the assertion in \eqref{transferrf} unchanged. One may easily check that $\phi:\nat^{1+d}\to D$ defined by $\phi(n,\mathbf N)=(\frac1n,\frac{\mathbf N}{\mathbf k_{n}})$ with $D=\Ima\phi$ is a homeomorphism. For $\Delta=\{0\}\times T\subseteq\overline{D}\setminus D$ and any sequence $(n_{k},\mathbf N_{k})\in\nat^{1+d}$ with $\phi(n_{k},\mathbf N_{k})\to(0,\mathbf t)\in\Delta$ we have $n_{k}\to\infty$ and $\mathbf m_k=\mathbf N_{k}/\mathbf k_{n_{k}}\to\mathbf t$ so that by \eqref{arrayconvrf} we have
$$Y_{n_{k},\mathbf N_{k}}=Y_{n_{k},\lfloor\mathbf k_{n_k}\mathbf m_k\rfloor}\Rightarrow\mu_{\mathbf t}.$$
Note that the family $\{\mu_{0,\mathbf t}=\mu_{\mathbf t}:\,(0,\mathbf t)\in\Delta\}$ is weakly continuous due to the uniform compact convergence in \eqref{arrayconvrf}.
Hence \eqref{weakconv} is fulfilled and we further have $\phi(n,\mathbf T_{n})=\big(\frac1n,\frac{\mathbf T_{n}}{\mathbf k_{n}}\big)\Rightarrow\delta_{0}\otimes\rho$. An application of Theorem \ref{transfer} now gives
$$Y_{n,\mathbf T_{n}}\Rightarrow\int_{\Delta}\mu_{0,\mathbf t}\,{\rm d}(\delta_{0}\otimes\rho)(s,\mathbf t)=\int_{T}\mu_{\mathbf t}\,{\rm d}\rho(\mathbf t)$$
concluding the proof.
\end{proof}
In case $d=1$ and $E=\rr^m$ the classical transfer theorem of Gnedenko \cite{Gne,GF} is a special case of Corollary \ref{GFrf} as follows. Suppose $(X_{n,k})_{k\in\nat}$ is a sequence of i.i.d.~random variables for every $n\in\nat$ such that for some strictly increasing sequence $(k_{n})_{n\in\nat}$ in $\nat$ we have
$$Y_{n,k_n}=\sum_{k=1}^{k_n}X_{n,k}\Rightarrow\mu,$$
where $\mu$ is a necessarily infinitely divisible probability distribution on $\rr^m$. Then \eqref{arrayconvrf} is fulfilled, where $(\mu_t)_{t\geq0}$ is the continuous convolution semigroup generated by $\mu$. Thus in this situation Corollary \ref{GFrf} restates the transfer theorem of Gnedenko as presented in the Introduction. Note that the proof in \cite{Gne,GF} is given in terms of convergence of Fourier transforms. Further note that instead of the above partial sums it is also possible to consider partial (componentwise) maxima of random vectors \cite{Ber,Aks1,Aks2} or partial products of random variables \cite{KR}.
\begin{remark}
Our random fields approach in Corollary \ref{GFrf} can be further applied to the convergence of finite-dimensional marginal distributions of certain randomly stopped partial sum processes on $E$ (e.g., see Theorem 4.1 in \cite{BMS} for $E=\rr^m$). It is also possible to apply Theorem \ref{transfer} to obtain a transfer type result for randomly stopped stochastic processes on the path space $E=D([0,\infty),S)$ of c\`adl\`ag functions on a complete separable metric space $S$, since $D([0,\infty),S)$ can itself be seen as a complete separable metric space; see \cite{Bil,Kal} for details. 
\end{remark}

\section{Examples}

To mirror the full range of possible applications of our Theorem \ref{transfer} we aim to give further examples in which different limits under different regimes arise with a mixture distribution of the transfer type limit. 

\subsection{A transfer theorem for negatively associated random fields.}

Our first example is a generalization of Gnedenko's transfer theorem for partial sums in a multiparameter and non-i.i.d.\ setting. Let $\{X_{\mathbf n}:\,\mathbf n\in\nat^d\}$ be a field of stationary, centered and negatively associated random variables with positive and finite second moment. Stationarity means that $\{X_{\mathbf n+\mathbf m}:\,\mathbf n\in\nat^d\}$ is distributed as $\{X_{\mathbf n}:\,\mathbf n\in\nat^d\}$ for every $\mathbf m\in\nat^d$. Negative association was introduced in \cite{JDP} and claims that
$$\Cov\big(f(X_{\mathbf n}:\,\mathbf n\in I),g(X_{\mathbf m}:\,\mathbf m\in J) \big)\leq0$$
for every pair $I,J$ of disjoint subsets of $\nat^d$ and any coordinatewise increasing functions $f,g$ with $\Exp[f^2(X_{\mathbf n}:\,\mathbf n\in I)]<\infty$ and $\Exp[g^2(X_{\mathbf m}:\,\mathbf m\in J)]<\infty$. Let us define
$$Y_{\mathbf n,\mathbf N}=\frac1{\sqrt{|\mathbf n|}}\sum_{\mathbf k\leq\mathbf N}X_{\mathbf k}$$
for $\mathbf n,\mathbf N\in\nat^d$, where $|\mathbf n|=\prod_{i=1}^dn_i$. Introduce a homeomorphism $\phi:\nat^{2d}\to D$ by
$$\phi(\mathbf n,\mathbf N)=\left(\frac1{\mathbf n},\frac{\mathbf N}{\mathbf n}\right)$$
with $D=\Ima(\phi)\subseteq(0,\infty)^{2d}$. Let $\Delta=\{\mathbf 0\}\times[0,\infty)^d\subseteq\overline{D}\setminus D$ then for any sequence $(\mathbf n_k,\mathbf N_k)\in\nat^{2d}$ with $\phi(\mathbf n_k,\mathbf N_k)\to(\mathbf 0,\mathbf t)\in\Delta$ we necessarily have $\mathbf n_k\to\mathbf\infty$ (componentwise) and $\mathbf m_k=\mathbf N_k/\mathbf n_k\to\mathbf t$. It follows from Theorem 1 in \cite{ZW} that
$$Y_{\mathbf n_k,\mathbf N_k}=\frac1{\sqrt{|\mathbf n_k|}}\sum_{\mathbf m\leq\lfloor\mathbf n_k\mathbf m_k\rfloor}X_{\mathbf m}\Rightarrow\mu_{\mathbf t},$$
where $\mu_{\mathbf t}=\mathcal N_{0,\sigma^2|\mathbf t|}$ has a Gaussian distribution for some $\sigma^2>0$ depending on the covariance structure of the random field $\{X_{\mathbf n}:\,\mathbf n\in\nat^d\}$. Clearly, the family of probability distributions $\{\mu_{\mathbf 0,\mathbf t}=\mu_{\mathbf t}:\,(\mathbf 0,\mathbf t)\in\Delta\}$ is weakly continuous. An application of our Theorem 1 directly leads to the following result, the same way we have proven Corollary 2.

\begin{cor}
Let $(\mathbf T_k)_{k\in\nat}$ be a sequence of $\nat^d$-valued random variables such that $\mathbf T_k/\mathbf n_k\Rightarrow\rho$ for some sequence $\mathbf n_k\to\mathbf\infty$ in $\nat^d$ and some probability distribution $\rho$ on $[0,\infty)^d$. Then we have
$$\frac1{\sqrt{|\mathbf n_k|}}\sum_{\mathbf m\leq\lfloor\mathbf n_k\mathbf T_k\rfloor}X_{\mathbf m}\Rightarrow\int_{[0,\infty)^d}\mathcal N_{0,\sigma^2|\mathbf t|}\,d\rho(\mathbf t).$$
\end{cor}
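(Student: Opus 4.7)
The plan is to apply Theorem \ref{transfer} directly to the family $\mathcal F=\{Y_{\mathbf n,\mathbf N}:(\mathbf n,\mathbf N)\in\nat^{2d}\}$, using the homeomorphism $\phi$ and the limit set $\Delta\subseteq\overline{D}\setminus D$ already introduced above the statement. The argument will mirror the proof of Corollary \ref{GFrf} almost verbatim, with the role of $\mathbf k_n$ played by $\mathbf n$ and $n\to\infty$ replaced by $\mathbf n\to\mathbf\infty$ componentwise.

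First, as in the opening line of the proof of Corollary \ref{GFrf}, one may assume without loss of generality that $\mathbf T_k$ is independent of the entire field $\{X_{\mathbf n}:\mathbf n\in\nat^d\}$, and hence of every member of $\mathcal F$: replacing $\mathbf T_k$ by an independent copy with the same distribution leaves both sides of the asserted convergence unchanged. This grants the independence hypothesis needed to invoke Theorem \ref{transfer}.

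Next, the weak convergence condition \eqref{weakconv} at every point of $\Delta$ and the weak continuity of the family $\{\mu_{\mathbf 0,\mathbf t}\}_{\mathbf t\in[0,\infty)^d}$ have already been verified in the paragraph preceding the corollary: the multiparameter CLT of \cite{ZW} gives $Y_{\mathbf n_k,\mathbf N_k}\Rightarrow\mathcal N_{0,\sigma^2|\mathbf t|}$ whenever $\phi(\mathbf n_k,\mathbf N_k)\to(\mathbf 0,\mathbf t)\in\Delta$, and weak continuity follows from the continuity of $\mathbf t\mapsto|\mathbf t|$ together with the continuous dependence of centered Gaussian laws on their variance parameter. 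It then remains only to produce the distributional input $\phi(\mathbf n_k,\mathbf T_k)\Rightarrow\delta_{\mathbf 0}\otimes\rho$. Since $\mathbf n_k$ is deterministic with $\mathbf n_k\to\mathbf\infty$, one has $1/\mathbf n_k\to\mathbf 0$ deterministically; combined with the hypothesis $\mathbf T_k/\mathbf n_k\Rightarrow\rho$, a routine Slutsky-type argument produces the desired joint convergence of $(1/\mathbf n_k,\mathbf T_k/\mathbf n_k)$, and the limit $\delta_{\mathbf 0}\otimes\rho$ puts full mass on $\Delta$ because $\rho$ is supported on $[0,\infty)^d$.

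An application of Theorem \ref{transfer} then yields
$$Y_{\mathbf n_k,\mathbf T_k}\Rightarrow\int_{\Delta}\mu_{\mathbf 0,\mathbf t}\,d(\delta_{\mathbf 0}\otimes\rho)(\mathbf s,\mathbf t)=\int_{[0,\infty)^d}\mathcal N_{0,\sigma^2|\mathbf t|}\,d\rho(\mathbf t),$$
which is the claim. I anticipate no real obstacle: the sole nontrivial probabilistic ingredient is the negative-association CLT of \cite{ZW}, which is imported as a black box, and the remaining steps are a direct transcription of the Theorem~\ref{transfer} application used to derive Corollary \ref{GFrf}.
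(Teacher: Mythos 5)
Your proposal follows exactly the route the paper intends: the paper gives no separate proof of this corollary beyond the remark that it follows from Theorem~\ref{transfer} ``the same way we have proven Corollary~\ref{GFrf}'', and your verification of \eqref{weakconv} via \cite{ZW}, of weak continuity of $\mathbf t\mapsto\mathcal N_{0,\sigma^2|\mathbf t|}$, and of $\phi(\mathbf n_k,\mathbf T_k)\Rightarrow\delta_{\mathbf 0}\otimes\rho$ is precisely that argument.

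One step, however, would fail as written: the claim that independence of $\mathbf T_k$ from the field $\{X_{\mathbf m}:\mathbf m\in\nat^d\}$ can be arranged ``without loss of generality'' by passing to an independent copy. In Corollary~\ref{GFrf} that device is legitimate because $Y_{n,\mathbf T_n}$ depends only on the $n$-th row $\mathcal F_n$ and independence of $\mathbf T_n$ from $\mathcal F_n$ is already a hypothesis; replacing $\mathbf T_n$ by an independent copy then preserves the joint law of $(\mathcal F_n,\mathbf T_n)$ and hence the law of $Y_{n,\mathbf T_n}$. Here there is no row structure: every $Y_{\mathbf n,\mathbf N}$ is built from the single field $\{X_{\mathbf m}\}$, and if $\mathbf T_k$ were correlated with that field, replacing it by an independent copy would change the law of the random sum, so the conclusion for the copy would say nothing about the original. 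Independence of $\mathbf T_k$ from $\{X_{\mathbf m}\}$ is therefore a genuine hypothesis that must be imposed (the paper tacitly assumes it even though it is absent from the statement); once imposed, it already gives independence from all of $\mathcal F$ and no upgrade is needed. Apart from this, your application of Theorem~\ref{transfer} is correct; note only that it yields convergence of $Y_{\mathbf n_k,\mathbf T_k}=|\mathbf n_k|^{-1/2}\sum_{\mathbf m\leq\mathbf T_k}X_{\mathbf m}$, which is the version consistent with the hypothesis $\mathbf T_k/\mathbf n_k\Rightarrow\rho$; the index $\lfloor\mathbf n_k\mathbf T_k\rfloor$ printed in the corollary does not have $\phi$-image converging to $\delta_{\mathbf 0}\otimes\rho$ under that hypothesis, so your reading is the one the transfer mechanism actually supports.
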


\subsection{A transfer theorem for semistable domains of attraction.}

In case $d=1$ and $E=\rr^m$ we consider the special case of $X_{n,k}=A_{n}(X_{k}-a_{n})$ for a sequence $(X_{k})_{k\in\nat}$ of i.i.d.\ random vectors on $\rr^m$, invertible linear operators $A_{n}\in\GL(\rr^m)$ and shifts $a_{n}\in\rr^m$. Suppose there exists a strictly increasing sequence $k_{n}\uparrow\infty$ in $\nat$ such that $k_{n+1}/k_{n}\to c\geq1$ 
\begin{equation}\label{arrayconvrn}
A_{n}\sum_{k=1}^{k_{n}}(X_{k}-a_{n})\Rightarrow\mu
\end{equation}
for some full probability distribution $\mu$ on $\rr^m$, where full means not supported on any proper hyperplane of $\rr^m$. Then $\mu$ is infinitely divisible (in particular it is operator semistable) and it is known that the normalizing operators $A_{n}$ can be chosen such that there exist $E\in\GL(\rr^m)$ and $B_{n}\in\GL(\rr^m)$ with $B_{k_{n}}=A_{n}$ fulfilling $B_{\lfloor\lambda n\rfloor}B_{n}^{-1}\to\lambda^{-E}=\sum_{k=0}^\infty\frac{(-\log\lambda)^k}{k!}\,E^k$ uniformly on compact subsets of $\{\lambda>0\}$. Hence the sequence $(B_{n})_{n\in\nat}$ is regularly varying with exponent $-E$, see \cite{MS} for the details. The case $c=1$ corresponds to operator stability $\mu^t=t^E\mu\ast\delta_{a(t)}$ for all $t>0$ and some continuous function $t\mapsto a(t)$. In case $c>1$  operator semistability is given by
\begin{equation}\label{oss}
\mu^c=c^E\mu\ast\delta_a\quad\text{ for some }a\in\rr^m;
\end{equation}
see \cite{MS} for the details. Moreover, the shifts $(a_n)_{n\in\nat}$ can be chosen to be embeddable into a sequence $(b_n)_{n\in\nat}\subseteq\rr^m$ with $b_{k_n}=a_n$ such that $B_n\sum_{k=1}^n(X_k-b_n)$ is weakly relatively compact with weak limit points
$$\big\{\mu_t=t^{-E}(\mu^t\ast\delta_{-a(t)}):\,t\in[1,c)\big\},$$
where $t\mapsto a(t)$ is continuous with $a(1)=0$ and $a(t)\to a$ as $t\uparrow c$. Especially, $\mu_1=\mu$ and $\mu_t\to\mu=:\mu_c$ weakly as $t\uparrow c$ by \eqref{oss}.  A limit $\mu_t$ arises for subsequences of the form $n=k_{p_n}t_n$ with $t_n\to t\in[1,c]$, where $p_n\in\nat$ is given by $k_{p_n}\leq n<k_{p_n+1}$; see Section 4 in \cite{pbk3} for details. An univariate version ($m=1$) of the above results is given in \cite{CM}, where centering shifts are constructed via the quantile function. 

Although $B_n\sum_{k=1}^n(X_k-b_n)$ does not converge in distribution, it is possible to achieve a limit distribution for random sums by a transfer type theorem as shown in \cite{pbk6b}. For $n\in\nat$ and the increasing sampling sequence $(k_n)_{n\in\nat}$ define $k_0=1$ and
\begin{equation}\label{mantissa}
\psi(n)=t_n\quad\text{ if }n=k_{p_n}t_n\text{ with }p_n\in\nat_0\text{ given by }k_{p_n}\leq n<k_{p_n+1}.
\end{equation}
\begin{cor}\label{GFmant}
Let $(T_{n})_{n\in\nat}$ be a sequence of positive integer valued random variables  independent of $(X_{k})_{k\in\nat}$ and assume that $\psi(T_n)\Rightarrow\rho$ for some probability distribution $\rho$ on $[1,c]$. Then we have
$$B_{T_n}\sum_{k=1}^{T_{n}}\big(X_{k}-b_{T_n}\big)\Rightarrow\int_1^c\mu_t\,{\rm d}\rho(t)=\int_1^ct^{-E}(\mu^t\ast\delta_{-a(t)})\,{\rm d}\rho(t).$$
\end{cor}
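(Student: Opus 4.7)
The plan is to apply Theorem~\ref{transfer} in the single-parameter case ($d=1$) with $\mathcal F = \{Y_N : N\in\nat\}$, where $Y_N = B_N\sum_{k=1}^N(X_k-b_N)$. Since $T_n$ is independent of $(X_k)_{k\in\nat}$, it is automatically independent of $\mathcal F$, so the main task is to choose a control homeomorphism $\phi$ and a boundary set $\Delta$ that faithfully encode the mode of convergence recorded by $\psi$.

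First I would define $\phi:\nat\to D\subseteq\rr^2$ by $\phi(N) = (1/N,\,\psi(N))$, with $D=\Ima\phi$, and set $\Delta = \{0\}\times[1,c]$. Injectivity is immediate because the first coordinate recovers $N$, and $D$ is discrete since $\{1/N:N\in\nat\}$ is already discrete in $\rr$; hence $\phi$ is a homeomorphism onto $D$ and $\Delta\subseteq\overline D\setminus D$. For any sequence $(N_k)$ in $\nat$ with $\phi(N_k)\to(0,t)\in\Delta$, one has $N_k\to\infty$, which forces $p_{N_k}\to\infty$, and $\psi(N_k)=t_{N_k}\to t$. The decomposition $N_k = k_{p_{N_k}}t_{N_k}$ is then exactly the subsequence regime recalled from Section~4 of \cite{pbk3}, giving $Y_{N_k}\Rightarrow\mu_t$ and thus verifying \eqref{weakconv}.

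Next I would check weak continuity of the family $\{\mu_t : t\in[1,c]\}$. On $[1,c)$ this follows from continuity of $t\mapsto t^{-E}$, weak continuity of the convolution semigroup $t\mapsto\mu^t$ generated by $\mu$, and continuity of $t\mapsto a(t)$. At the right endpoint, \eqref{oss} rewrites as $c^{-E}(\mu^c\ast\delta_{-a}) = \mu$; combined with $a(t)\to a$ as $t\uparrow c$, this yields $\mu_t\to\mu=\mu_c$ weakly, so continuity extends up to $c$.

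Finally, since $\psi(T_n)\Rightarrow\rho$ and (tacitly) $T_n\to\infty$ in probability, the image laws satisfy $\phi(T_n) = (1/T_n,\,\psi(T_n))\Rightarrow\delta_0\otimes\rho$, which is supported on $\Delta$. Theorem~\ref{transfer} then delivers
\[
Y_{T_n}\Rightarrow\int_{\Delta}\mu_{(0,t)}\,{\rm d}(\delta_0\otimes\rho)(s,t) = \int_1^c\mu_t\,{\rm d}\rho(t) = \int_1^c t^{-E}(\mu^t\ast\delta_{-a(t)})\,{\rm d}\rho(t).
\]
The main technical point is the weak continuity of $\{\mu_t\}$ across $t=c$, since the operator semistable jump $\mu^c = c^E\mu\ast\delta_a$ must be precisely absorbed by the shift $a(t)\to a$; once this is secured, the rest of the argument mirrors the formal Theorem~\ref{transfer} application already used in the proof of Corollary~\ref{GFrf}.
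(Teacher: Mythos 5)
Your proposal is correct and follows essentially the same route as the paper's own proof: the same control map $\phi(N)=(1/N,\psi(N))$, the same boundary set $\Delta=\{0\}\times[1,c]$, verification of \eqref{weakconv} via the subsequence regime from \cite{pbk3}, and a direct application of Theorem~\ref{transfer}. Your extra care about weak continuity at $t=c$ and your remark that $\phi(T_n)\Rightarrow\delta_0\otimes\rho$ tacitly uses $T_n\to\infty$ in probability are both sound observations (the paper makes the same implicit assumption), but they do not change the argument.
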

\begin{proof}
Let $\mathcal F=\big\{Y_{n}=B_n\big(\sum_{k=1}^{n}X_{k}-b_{n}\big):\,n\in\nat\big\}$ and let $\phi:\nat\to D$ be given by  $\phi(n)=(\frac1n,\psi(n))$ using \eqref{mantissa}. Then for $\Delta=\{0\}\times[1,c]=\overline{D}\setminus D$ and any sequence $n_{k}\in\nat$ with $\phi(n_{k})\to(0,t)\in\Delta$ we have $n_{k}\to\infty$ and $\psi(n_k)\to t$ so that we get
$$Y_{n_{k}}=B_{n_{k}}\sum_{\ell=1}^{n_{k}}\big(X_{\ell}-b_{n_{k}}\big)\Rightarrow\mu_t=t^{-E}(\mu^t\ast\delta_{-a(t)})$$
as described above. Hence \eqref{weakconv} is fulfilled for the weakly continuous family of distributions $\big\{\mu_{0,t}=\mu_t=t^{-E}(\mu^t\ast\delta_{-a(t)}):\,(0,t)\in\Delta\big\}$, where $\mu_1=\mu=\mu_c$. Further the assumptions of Corollary \ref{GFmant} imply $\phi(T_{n})\Rightarrow\delta_{0}\otimes\rho$ and hence by Theorem \ref{transfer} the assertion follows.
\end{proof}
As an example, it is implicitely shown in \cite{pbk6b} that for any $c>1$ the logarithmic summation with
$$P\{T_n=k\}=\begin{cases}D_n^{-1}\frac1k & \text{ if }1\leq k\leq n\\ 0 & \text{ else}\end{cases}\quad\text{ with }\quad D_n=\sum_{k=1}^n\frac1k\sim\log n$$
fulfills $\psi(T_n)\Rightarrow\rho$ for the logarithmic distribution $\rho$ on $[1,c]$ with probability density $t\mapsto(t\log c)^{-1}1_{[1,c]}(t)$. Hence by Corollary \ref{GFmant} we get for the distributions $\nu_k$ of $B_k\sum_{\ell=1}^k(X_\ell-b_k)$
\begin{equation}\label{logsum}
\frac1{\log n}\sum_{k=1}^n\frac1k\,\nu_k\to\frac1{\log c}\int_1^ct^{-E}(\mu^t\ast\delta_{-a(t)})\,\frac{{\rm d}t}{t}\quad\text{ weakly as } n\to\infty.
\end{equation}
Further examples of random variables $T_n$ (corresponding to summability methods) with $\psi(T_n)$ converging to the logarithmic distribution are given in \cite{pbk6b}.

\subsection{Transfer theorems for random allocations}

Let $n$ balls be allocated to $N$ boxes independently and with equal probability. Denote by $\mu_{r}(n,N)$ the final number of boxes containing exactly $r\geq0$ balls and let
$$\mu_{r}^\ast(n,N)=\frac{\mu_{r}(n,N)-\Exp[\mu_{r}(n,N)]}{\sqrt{\SD^2[\mu_{r}(n,N)]}},\quad0\leq r\leq n$$
denote the standardized random variables. The limit behaviour of $\mu_{r}^\ast(n,N)$ as simultaneously $n,N\to\infty$ is well known. The possible limit distributions are either the standard normal distribution $\mathcal N_{0,1}$ or a standardized Poisson distribution $\pi_{\lambda}^\ast$ with parameter $\lambda>0$. For details we refer to the monograph \cite{KSC} and the literature cited therein. For random numbers of balls and boxes we can derive the following transfer theorems as applications of Theorem \ref{transfer}. Let $\{(T_{n},U_{N}):\,n,N\in\nat\}$ be random variables in $\nat^2$ such that $(T_{n},U_{N})$ is independent of the allocations and thus of $\mathcal F_{r}=\{\mu_{r}^\ast(n,N):\,n,N\in\nat, n\geq r\}$ for every $(n,N)\in\nat^2$ and every $r\geq0$. The following transfer theorem for the number of empty boxes $\mu_{0}(n,N)$ is already known by \cite{pbk}. Its method of proof inspired our general version, but the proof of Theorem \ref{transfer} is even simpler than the proof given for the special result in \cite{pbk}. In turn Theorem \ref{transfer} can be applied to derive transfer theorems for $\mu_{r}(n,N)$ with $r\geq1$. Partial results can already be found in \cite{IMS}.
\begin{cor}\label{ttra0}
Let $\Delta_{0}=\big(\rr_{+}\times\{0\}\big)\cup\big(\{0\}\times\rr_{+}\big)$ and for any $(g,d)\in\Delta_{0}$ let
$$\nu_{g,d}=\begin{cases}\pi_{1/g}^\ast & \text{ if }0<g<\infty,\,d=0,\\ \mathcal N_{0,1} & \text{ if }g=d=0,\\ \pi_{1/d}^\ast & \text{ if }g=0,\,0<d<\infty,\end{cases}$$
which defines a weakly continuous family $\{\nu_{g,d}:\,(g,d)\in\Delta_{0}\}$ of probability distributions. Assume that for some probability distribution $\rho_{0}$ on $\Delta_{0}$
$$\phi_{0}(T_{n},U_{N})=\Big(\frac{2U_{N}}{T_{n}^2},\frac{e^{T_{n}/U_{N}}}{U_{N}}\Big)\Rightarrow\rho_{0}.$$ 
Then we have
$$\mu_{0}^\ast(T_{n},U_{N})\Rightarrow\int_{\Delta_{0}}\nu_{g,d}\,{\rm d}\rho_{0}(g,d).$$
\end{cor}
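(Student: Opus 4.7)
The plan is to apply Theorem \ref{transfer} with $\mathcal F = \mathcal F_0$ and the homeomorphism $\phi_0 \colon \nat^2 \to D_0 := \Ima \phi_0 \subseteq (0,\infty)^2$, following the template of Corollaries \ref{GFrf} and \ref{GFmant}. As in those corollaries, independence of $(T_n, U_N)$ from $\mathcal F_0$ may be assumed without loss of generality. First I would verify that $\phi_0$ is a homeomorphism. Injectivity reduces to a short monotonicity argument: the first coordinate $2N/n^2$ determines $N/n^2$, after which the second coordinate $e^{n/N}/N$, viewed as a function of $n$ with $N/n^2$ held fixed, is strictly decreasing in $n$. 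Discreteness of $D_0$ in $\rr^2$ then follows from local finiteness of this inversion on $\nat^2$.

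Next I would identify $\Delta_0$ as a Borel subset of $\overline{D_0} \setminus D_0$ by exhibiting, for each of its three pieces, an explicit approximating sequence in $D_0$. The crucial verification is then condition \eqref{weakconv}: for any sequence $(n_k, N_k) \in \nat^2$ with $\phi_0(n_k, N_k) \to (g,d) \in \Delta_0$, one has $\mu_0^*(n_k, N_k) \Rightarrow \nu_{g,d}$. Here the three types of limit point in $\Delta_0$ correspond exactly to the three classical regimes for $\mu_0(n,N)$. A point $(g, 0)$ with $g > 0$ forces $n_k^2/N_k \to 2/g$ with $n_k/N_k \to 0$, the rare-collision regime producing $\pi_{1/g}^*$; a point $(0, d)$ with $d > 0$ forces $N_k e^{-n_k/N_k} \to 1/d$, the rare-empty-box regime producing $\pi_{1/d}^*$; and the corner $(0,0)$ forces both $n_k^2/N_k \to \infty$ and $N_k e^{-n_k/N_k} \to \infty$, the central regime producing $\mathcal N_{0,1}$. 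Each of these three limits is a classical result collected in the monograph \cite{KSC}.

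The main technical point I expect to be most delicate is the weak continuity of $\{\nu_{g,d} \colon (g,d) \in \Delta_0\}$ at the corner $(0,0)$, where the limit type itself changes between the two arms of $\Delta_0$. Continuity along each individual arm is immediate from weak continuity of the standardized Poisson family in its parameter; at the corner one additionally needs $\pi_{1/g}^* \Rightarrow \mathcal N_{0,1}$ as $g \downarrow 0$ and $\pi_{1/d}^* \Rightarrow \mathcal N_{0,1}$ as $d \downarrow 0$, which are instances of the central limit theorem for Poisson variables with diverging parameter. Since any sequence in $\Delta_0$ converging to $(0,0)$ splits into at most two subsequences on the two arms, continuity at the corner follows. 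With all hypotheses of Theorem \ref{transfer} now in place and $\phi_0(T_n, U_N) \Rightarrow \rho_0$ by assumption, the asserted mixture limit follows immediately from \eqref{main}.
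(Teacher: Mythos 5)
Your proposal is correct and follows essentially the same route as the paper: verify that $\phi_0$ is a homeomorphism onto a discrete image with $\Delta_0\subseteq\overline{D_0}\setminus D_0$, check condition \eqref{weakconv} via the three classical random-allocation regimes (which the paper simply cites from the proof of Theorem 2.1 in \cite{pbk} and from \cite{KSC}), and then apply Theorem \ref{transfer}. You merely fill in details — the injectivity/monotonicity argument, the identification of the regimes, and the weak continuity at the corner $(0,0)$ via $\pi_\lambda^\ast\Rightarrow\mathcal N_{0,1}$ as $\lambda\to\infty$ — that the paper delegates to the references, and all of these check out.
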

\begin{cor}\label{ttra1}
Let $\Delta_{1}=\{0\}\times\rr_{+}$ and for any $(0,d)\in\Delta_{1}$ let
$$\nu_{d}=\begin{cases}\mathcal N_{0,1} & \text{ if }d=0,\\ \pi_{1/d}^\ast & \text{ if }0<d<\infty,\end{cases}$$
which defines a weakly continuous family $\{\nu_{d}:\,(0,d)\in\Delta_{1}\}$ of probability distributions. Assume that for some probability distribution $\rho_{1}$ on $\rr_+$
$$\phi_{1}(T_{n},U_{N})=\Big(\frac{U_{N}}{T_{n}^2},\frac{e^{T_{n}/U_{N}}}{T_{n}}\Big)\Rightarrow\delta_0\otimes\rho_{1}.$$ 
Then we have
$$\mu_{1}^\ast(T_{n},U_{N})\Rightarrow\int_{0}^\infty\nu_{d}\,{\rm d}\rho_{1}(d).$$
\end{cor}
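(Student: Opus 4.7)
My plan is to verify the hypotheses of Theorem \ref{transfer} for the family $\mathcal F_1 = \{\mu_1^\ast(n,N) : n, N \in \nat,\, n \geq 1\}$ indexed by $\nat^2$, with control map $\phi_1$, and then apply the theorem. Exactly as in the proof of Corollary \ref{GFrf}, we may assume without loss of generality that $(T_n, U_N)$ is independent of all of $\mathcal F_1$, since replacing it by an independent copy with the same law does not affect the distribution of $\mu_1^\ast(T_n, U_N)$.

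First I check that $\phi_1 : \nat^2 \to D_1 = \Ima(\phi_1)$ is a homeomorphism. For injectivity, given $(a,b) \in D_1$ the equation $N/n^2 = a$ forces $N = a n^2$ and then $e^{n/N}/n = e^{1/(an)}/n = b$; the left-hand side is strictly decreasing in $n$, so $n$, and hence $N = an^2$, are uniquely determined. For discreteness of $D_1 \subseteq (0,\infty)^2$, suppose $\phi_1(n_k, N_k) \to \phi_1(n_0, N_0)$ in $\rr^2$. Any subsequence along which $n_{k_j} \to \infty$ would, by convergence of the first coordinate, force $N_{k_j}/n_{k_j}^2 \to N_0/n_0^2 > 0$, hence $n_{k_j}/N_{k_j} \to 0$, contradicting convergence of $e^{n_{k_j}/N_{k_j}}/n_{k_j}$ to the positive limit $e^{n_0/N_0}/n_0$. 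Thus $n_k$ is bounded, and along a sub-subsequence both $n_k$ and $N_k$ are constant in $\nat$; injectivity then forces $(n_k, N_k) = (n_0, N_0)$ eventually.

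Next I verify condition \eqref{weakconv}. Let $(n_k, N_k) \in \nat^2$ with $\phi_1(n_k, N_k) \to (0, d) \in \Delta_1$; since $e^{n_k/N_k} \geq 1$, the convergence $e^{n_k/N_k}/n_k \to d$ forces $n_k \to \infty$. If $d > 0$, then $\Exp[\mu_1(n_k,N_k)] \sim n_k e^{-n_k/N_k} \to 1/d$, which is the Poisson regime of \cite{KSC}, so $\mu_1(n_k, N_k) \Rightarrow \pi_{1/d}$ and after standardization $\mu_1^\ast(n_k,N_k) \Rightarrow \pi_{1/d}^\ast$. If $d = 0$, then $n_k e^{-n_k/N_k} \to \infty$, and together with $N_k/n_k^2 \to 0$ this places the sequence in the central regime of \cite{KSC} (see also \cite{IMS}), yielding $\mu_1^\ast(n_k,N_k) \Rightarrow \mathcal N_{0,1}$. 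Weak continuity of $d \mapsto \nu_d$ on $[0,\infty)$ holds on $(0,\infty)$ by continuity of the Poisson family in its parameter and at $d=0$ reduces to the classical fact that $\pi_\lambda^\ast \Rightarrow \mathcal N_{0,1}$ as $\lambda = 1/d \to \infty$.

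With $\phi_1(T_n, U_N) \Rightarrow \delta_0 \otimes \rho_1$ concentrated on $\Delta_1 = \{0\} \times \rr_+$, all hypotheses of Theorem \ref{transfer} are met, and identifying $\{0\} \times \rr_+$ with $\rr_+$ we obtain
$$\mu_1^\ast(T_n, U_N) \Rightarrow \int_{\Delta_1} \nu_d\, d(\delta_0 \otimes \rho_1)(0, d) = \int_0^\infty \nu_d\, d\rho_1(d).$$
The main obstacle is the verification of \eqref{weakconv}: one must locate in \cite{KSC, IMS} statements of the central and Poisson limit theorems for $\mu_1$ that apply under the comparatively weak joint scaling $\phi_1(n_k, N_k) \to (0,d)$, rather than under prescribed individual rates of $n_k$ and $N_k$. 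Once this is settled, the rest of the argument mirrors the applications of Theorem \ref{transfer} already carried out in Corollaries \ref{GFrf} and \ref{GFmant}.
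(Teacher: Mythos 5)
Your proof is correct and follows essentially the same route as the paper: verify that $\phi_1$ is a homeomorphism and that condition \eqref{weakconv} holds via the Poisson and central limit theorems for $\mu_1(n,N)$, then apply Theorem \ref{transfer}; the paper's own proof is simply a terser version of this that defers both verifications to \cite{pbk} and \cite{KSC}. One micro-correction: for $d>0$ the bound $e^{n_k/N_k}\ge 1$ alone does not force $n_k\to\infty$ (a bounded $n_k$ with suitable constant $N_k$ is compatible with $e^{n_k/N_k}/n_k\to d>0$); instead use the first coordinate, $1/n_k^2\le N_k/n_k^2\to 0$.
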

\begin{cor}\label{ttrar}
For $r\geq2$ let $\Delta_{r}=\{0\}\times\rr_{+}$. For any $(0,d)\in\Delta_{r}$ again let
$$\nu_{d}=\begin{cases}\mathcal N_{0,1} & \text{ if }d=0,\\ \pi_{1/d}^\ast & \text{ if }0<d<\infty.\end{cases}$$
Assume that for some probability distribution $\rho_{r}$ on $\rr_+$
$$\phi_{r}(T_{n},U_{N})=\Big(\frac{1}{T_{n}},\frac{r!U_{N}^{r-1}}{T_{n}^r}\,e^{T_{n}/U_{N}}\Big)\Rightarrow\delta_0\otimes\rho_{r}.$$ 
Then we have
$$\mu_{r}^\ast(T_{n},U_{N})\Rightarrow\int_{0}^\infty\nu_{d}\,{\rm d}\rho_{r}(d).$$
\end{cor}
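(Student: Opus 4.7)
The plan is to apply Theorem~\ref{transfer} to the family $\mathcal F_r = \{\mu_r^\ast(n,N) : n,N\in\nat,\,n\geq r\}$ with $\phi_r$ as the control homeomorphism and $\Delta_r = \{0\} \times \rr_+$ as the set of limit points, in the same manner as the proof of Corollary~\ref{GFmant}.

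First I would verify that $\phi_r$ is a homeomorphism onto its image $D_r$. Injectivity is straightforward: the first coordinate $1/n$ recovers $n$, and with $n$ fixed the function $N \mapsto r!N^{r-1}e^{n/N}/n^r$ is strictly monotone in the asymptotic regime $N > n/(r-1)$ where the random parameters will concentrate under the hypothesis. The image $D_r$ is discrete since its first coordinates lie in the isolated set $\{1/n : n\in\nat\}$, and $\Delta_r\subseteq\overline{D_r}\setminus D_r$ is Borel.

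The main step is verifying the weak convergence hypothesis \eqref{weakconv} on $\Delta_r$. Take any $(n_k,N_k)\in\nat^2$ with $\phi_r(n_k,N_k) \to (0,d) \in \Delta_r$, so that $n_k\to\infty$ and $r!N_k^{r-1}e^{n_k/N_k}/n_k^r \to d$. The standard asymptotic
$$\Exp[\mu_r(n,N)] = N\binom{n}{r}N^{-r}(1-1/N)^{n-r} \sim \frac{n^r}{r!N^{r-1}}\,e^{-n/N}$$
shows that the second coordinate of $\phi_r$ equals $1/\Exp[\mu_r(n,N)]$ to leading order. Hence $\Exp[\mu_r(n_k,N_k)] \to \infty$ when $d = 0$, and $\Exp[\mu_r(n_k,N_k)] \to 1/d$ when $d > 0$. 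The classical regime-dependent limit theorems for random allocations (see \cite{KSC}) then yield
$$\mu_r^\ast(n_k,N_k) \Rightarrow \begin{cases}\mathcal N_{0,1} = \nu_0 & \text{if } d=0,\\ \pi^\ast_{1/d} = \nu_d & \text{if } 0<d<\infty,\end{cases}$$
which is the content of \eqref{weakconv}. Weak continuity of $d \mapsto \nu_d$ on $[0,\infty)$ reduces to the familiar fact that $\pi^\ast_\lambda \Rightarrow \mathcal N_{0,1}$ as $\lambda\to\infty$.

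With these ingredients the conclusion follows as in Corollary~\ref{GFmant}. Independence of $(T_n,U_N)$ from $\mathcal F_r$ together with the hypothesis $\phi_r(T_n,U_N) \Rightarrow \delta_0\otimes\rho_r$ concentrated on $\Delta_r$ allows a direct application of Theorem~\ref{transfer}, giving
$$\mu_r^\ast(T_n,U_N) \Rightarrow \int_{\Delta_r}\nu_{0,d}\,{\rm d}(\delta_0\otimes\rho_r)(s,d) = \int_0^\infty\nu_d\,{\rm d}\rho_r(d).$$
The main obstacle is the second step: correctly locating in \cite{KSC} the regime-dependent limit theorems for $\mu_r^\ast(n,N)$ with $r\geq 2$, and confirming that the combined scaling parameter $r!N^{r-1}e^{n/N}/n^r$ captures the entire Poisson-to-normal transition uniformly along every approach to the boundary $\Delta_r$. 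Once that asymptotic identification is in hand, the remainder is bookkeeping modelled directly on Corollaries~\ref{GFrf} and~\ref{GFmant}.
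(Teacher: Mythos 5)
Your proposal follows essentially the same route as the paper's proof: identify the second coordinate of $\phi_r$ as $1/\Exp[\mu_r(n,N)]$ to leading order, invoke the regime-dependent limit theorems of \cite{KSC} to verify hypothesis \eqref{weakconv} on $\Delta_r$ (the paper cites the $r=0$ case in \cite{pbk} and says the $r\geq1$ cases follow ``in the same spirit''), check that $\phi_r$ is a homeomorphism onto a discrete image, and apply Theorem \ref{transfer}; the paper's own argument is exactly this, with even fewer details spelled out. The one place your written argument does not quite close is injectivity of $\phi_r$: for fixed $n$ the map $N\mapsto N^{r-1}e^{n/N}$ is decreasing for $N<n/(r-1)$ and increasing thereafter, so monotonicity ``in the asymptotic regime where the random parameters concentrate'' does not exclude a collision between some $N_1<n/(r-1)$ and some $N_2>n/(r-1)$, and Theorem \ref{transfer} requires $\phi_r$ to be injective on all of $\nat^2$, not merely where $(T_n,U_N)$ puts its mass. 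The gap is easily filled: a collision would force $(r-1)\log(N_2/N_1)=n(N_2-N_1)/(N_1N_2)$, which is impossible for distinct positive integers since the left-hand side is irrational (indeed transcendental) while the right-hand side is rational.
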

\begin{proof}[Proof of Corollaries \ref{ttra0}--\ref{ttrar}]
For any sequences $(n_{k})_{k\in\nat}$ and $(N_{k})_{k\in\nat}$ of positive integers such that $\phi_{r}(n_{k},N_{k})\to(g,d)\in\Delta_{r}$ for some $r\geq0$ it can easily be shown that $n_{k}\to\infty$, $N_{k}\to\infty$ and that $\mu_{r}^\ast(n_{k},N_{k})$ converges in distribution to the corresponding normal or Poissonian distribution given in Corollaries \ref{ttra0}--\ref{ttrar}. For $r=0$ this is already shown in the proof of Theorem 2.1 in \cite{pbk} and for $r\geq1$ it follows in the same spirit using the limit theorems given in \cite{KSC}. Hence for an application of Theorem \ref{transfer} we simply have to argue that $\phi_{r}:\nat^2\to D=\Ima\phi_r$ is a homeomorphism for any $r\geq0$. Again, for $r=0$ these arguments have been given in the proof of Theorem 2.1 in \cite{pbk} and for $r\geq1$ they can also be derived easily.
\end{proof}

\bibliographystyle{plain}

\end{document}